%
%

\documentclass[12pt]{amsart}
\usepackage{latexsym,amssymb,amscd}

%
%
\def\NZQ{\mathbb}               

\def\ZZ{{\NZQ Z}}

%
%

%

\def\B'c{{\mathcal{B'}}}
\def\U'c{{\mathcal{U'}}}

%

%
\def\opn#1#2{\def#1{\operatorname{#2}}} 
%
\opn\chara{char}
\opn\length{\ell}
\opn\projdim{proj\,dim}
\opn\injdim{inj\,dim}
\opn\ini{in}
\opn\rank{rank}
\opn\depth{depth}
\opn\sdepth{sdepth}
\opn\height{ht}
\opn\embdim{emb\,dim}
\opn\codim{codim}

\opn\Tr{Tr}
\opn\bigrank{big\,rank}
\opn\superheight{superheight}\opn\lcm{lcm}
\opn\trdeg{tr\,deg}%
\opn\reg{reg}
\opn\lreg{lreg}
\opn\set{set}
\opn\supp{Supp}
\opn\shad{Shad}
%
\opn\div{div}
\opn\Div{Div}
\opn\cl{cl}
\opn\Cl{Cl}
%
%
\opn\Spec{Spec}
\opn\Supp{Supp}
\opn\supp{supp}
\opn\Sing{Sing}
\opn\Ass{Ass}
%
%
\opn\Ann{Ann}
\opn\Rad{Rad}
\opn\Soc{Soc}
%
%
\opn\Ker{Ker}
\opn\Coker{Coker}
\opn\Im{Im}
\opn\Hom{Hom}
\opn\Tor{Tor}
\opn\Ext{Ext}
\opn\End{End}
\opn\Aut{Aut}
\opn\id{id}

\opn\nat{nat}
\opn\GL{GL}
\opn\SL{SL}
\opn\mod{mod}
\opn\ord{ord}
%
%
\opn\aff{aff}
\opn\con{conv}
\opn\relint{relint}
\opn\st{st}
\opn\lk{lk}
\opn\cn{cn}
\opn\core{core}
\opn\vol{vol}
\opn\gr{gr}

%
%

\def\pot#1#2{#1[\kern-0.28ex[#2]\kern-0.28ex]}

%
%
\opn\dirlim{\underrightarrow{\lim}}
\opn\invlim{\underleftarrow{\lim}}
%
%
%

\def\pnt{{\raise0.5mm\hbox{\large\bf.}}}

%
%

\def\Implies{\ifmmode\Longrightarrow \else
     \unskip${}\Longrightarrow{}$\ignorespaces\fi}
\def\implies{\ifmmode\Rightarrow \else
     \unskip${}\Rightarrow{}$\ignorespaces\fi}
\def\iff{\ifmmode\Longleftrightarrow \else
     \unskip${}\Longleftrightarrow{}$\ignorespaces\fi}

\let\:=\colon
\newtheorem{Theorem}{Theorem}[section]
\newtheorem{Lemma}[Theorem]{Lemma}
\newtheorem{Corollary}[Theorem]{Corollary}

\newtheorem{Remark}[Theorem]{Remark}

\newtheorem{Example}[Theorem]{Example}

\newtheorem{Definition}[Theorem]{Definition}

\newcommand{\beq}[1]{\begin{equation}\label{#1}}
\newcommand{\eeq}{\end{equation}}

\newtheorem{theorem}{Theorem}[section]
\newcommand{\bet}[1]{\begin{theorem}\label{#1}}
\newcommand{\eet}{\end{theorem}}
\newtheorem{lemma}[theorem]{Lemma}
\newcommand{\bel}[1]{\begin{lemma}\label{#1}}
\newcommand{\eel}{\end{lemma}}
\newcommand{\bep}{\begin{proof}}
\newcommand{\eep}{\end{proof}}
%
%
\let\epsilon=\varepsilon
\let\phi=\varphi
\let\kappa=\varkappa
%
%
\textwidth=15cm
\textheight=22cm
\topmargin=0.5cm
\oddsidemargin=0.5cm
\evensidemargin=0.5cm
\pagestyle{plain}
\footskip=40 pt
%
%

\numberwithin{equation}{section}

\title{Stanley depth of Edge ideals}
\author[Muhammad Ishaq]{Muhammad Ishaq}
\address{Muhammad Ishaq, Abdus Salam School of Mathematical Sciences, GC
University, Lahore, 68-B New Muslim town Lahore, Pakistan}
\email{ishaq$\_\,$maths@yahoo.com}

\author[Muhammad Imran Qureshi]{Muhammad Imran Qureshi}
\address{Muhammad Imran Qureshi, Abdus Salam School of Mathematical Sciences, GC
University, Lahore, 68-B New Muslim town Lahore, Pakistan}
\email{imranqureshi18@gmail.com}

\thanks{The authors would like to express their gratitude to ASSMS of GC University Lahore for creating a very appropriate atmosphere for research work. This research is partially supported by HEC Pakistan.}

\begin{document}

\maketitle

\begin{abstract}
We give an upper bound for the Stanley depth of the edge ideal $I$ of a $k$-partite complete graph and show that Stanley's conjecture holds for $I$. Also we give an upper bound for the Stanley depth of the edge ideal of a $k$-uniform complete bipartite hypergraph.
\vskip 0.4 true cm
\noindent
Key words : Monomial Ideals,  Stanley decompositions, Stanley depth.\\
2000 Mathematics Subject Classification: Primary 13C15, Secondary 13P10, 13F20, 05E45, 05C65.
\end{abstract}
\section{Introduction}
Let $S=K[x_1,\ldots,x_n]$ be the polynomial ring  in $n$ variables
over a field  $K$ and $M$ be a finitely generated $\ZZ^n$-graded
$S$-module. If $u\in M$ is a homogeneous element in $M$ and
$Z\subset \{x_1,\ldots,x_n\}$ then let $uK[Z]\subset M$ denote
the linear $K$-subspace of all elements of the form $uf$,
$f\in K[Z]$. This space is called a Stanley space of dimension
$|Z|$ if $uK[Z]$ is a free $K[Z]$-module. A Stanley decomposition
of module $M$ is a presentation of the $K$-vector space $M$ as
a finite direct sum of Stanley spaces
$$\mathcal{D}:M=\bigoplus^r _{i=1} u_i K[Z_i].$$ The number
$$\sdepth(\mathcal{D})=\min\{|Z_i|: i=1,\ldots,r\}.$$is called the Stanley depth of decomposition $\mathcal{D}$ and the number
\[\sdepth(M):=\max\{\sdepth(\mathcal{D}):\mathcal{D} \text{ is a
 Stanley decomposition of}\ M\}
\]
is called the Stanley depth of $M$. This is a combinatorial invariant
which does not depend on the characteristic of $K$.
In 1982 Stanley conjectured (see \cite{RPS}) that
$\sdepth M\geq\depth M$.
This conjecture has been proved in several special cases (for example see~\cite{A}, \cite{AP}, \cite{MI1},\cite{DP1}, \cite{DP} and \cite{PQ}) but it is still open in general.  A method to compute the Stanley depth is given in
\cite{HVZ}. Even when it does not provide the value of the
Stanley depth, this method allows one to obtain fairly good
estimations for the invariant of interest.\\
\indent The aim of this paper is to bound the Stanley depth of the edge ideal of a complete $k$-partite graph and an $s$-uniform complete bipartite hypergraph(see Lemma \ref{kpartite}, Theorem \ref{graphsdepth}). In Corollary \ref{sconjecture} we showed that Stanley's conjecture holds for the edge ideal of a complete $k$-partite graph.
\\ \textbf{Acknowledgement:} Both authors are grateful to Professor D. Popescu
for helpful discussions
during the preparation of this paper.
\section{Stanley depth of edge ideal of $k$-partite graph}
\begin{Definition}{\em
\noindent Let $G(V,E)$ be a graph with vertex set $V$ and edge set $E$. Then $G(V,E)$ is called a complete graph if every $e\subset V$ such that $|e|=2$ belongs to $E$.
}
\end{Definition}
\begin{Definition}{\em
A graph $G(V,E)$ with vertex set $V$ and edge set $E$ is called complete $k$-partite if the vertex set $V$ is partitioned into $k$ disjoint subset $V_1,V_2,\dots,V_k$ and $E=\{\{u,v\}: u\in V_i, v\in V_j, i\neq j\}$.}
\end{Definition}
\begin{Definition}{\em
Let $G$ be a graph. Then the edge ideal $I$ associated to $G$ is the squarefree monomial ideal $I=(x_i x_j: \{v_i,v_j\}\in E)$ of $S$.}
\end{Definition}
Now let $G$ be a complete $k$-partite graph with vertex set $V(G)=V_1\cup V_2\cup\dots\cup V_k$ with $|V_i|=r_i$, where $r_i\in \mathbb{N}$ and $2\leq r_1\leq \dots\leq r_k$.
Let $r_1+\dots+r_k=n$. Let $I_1=(x_1,\dots,x_{r_1})$,$I_2=(x_{r_1+1},\dots,x_{r_1+r_2}),\dots, I_k=(x_{r_1+\dots+r_{k-1}+1},\dots,x_n)$ be the monomial ideals in $S$.
Then the edge ideal of $G$ is of the form $$I=(\sum_{i\neq j} I_i\cap I_j).$$
We recall the method of Herzog, Vladoiu and Zheng \cite{HVZ} for computing the Stanley depth of a squarefree monomial ideal $I$ using posets. Let $G(I)=\{u_1,\dots,v_l\}$ be the set of minimal monomial generators of $I$. The characteristic poset of $I$ with respect to $h=(1,1,\dots,1)$(see \cite{HVZ}), denoted by $\mathcal{P}_I ^h$ is in fact the set
\[\mathcal{P}_I ^h=\{C\subset[n]\mid C \text{ contains the } \supp(u_i) \text{ for some } i\}\]
where $\supp(u_i)=\{j:x_j\mid u_i\}\subseteq[n]:=\{1,\dots,n\}$. For every $A,B\subset \mathcal{P}_I ^h$ with $A\subset B$, define the interval $[A,B]$ to be $\{C\in \mathcal{P}_I ^h: A\subseteq C\subseteq B\}$. Let $\mathcal{P}:\mathcal{P}_I ^h=\cup_{i=1}^r[C_i,D_i]$ be a partition of $\mathcal{P}_I ^h$, and for each $i$, let $c(i)\in \{0,1\}^n$ be the $n$-tuple such that $\supp(x^{c(i)})=C_i$. Then there is a Stanley decomposition $\mathcal{D(P)}$ of $I$
\[\mathcal{D(P)}: I=\bigoplus\limits_{i=1}^s x^{c(i)}K[\{x_k\mid k\in D_i\}].\]
By \cite{HVZ} we get that
\[\sdepth(I)=\max\{\sdepth(\mathcal{D(P)})\mid \mathcal{P} \text{ is a partition of } \mathcal{P}_I ^h\}.\]
\begin{Lemma}\label{kpartite}
$$\sdepth(I)\leq 2+\frac{\binom{n}{3}-(\sum\limits_{i=1} ^k\binom{r_i}{3})}{\sum\limits_{1\leq i< j\leq k} r_i r_j}$$
\end{Lemma}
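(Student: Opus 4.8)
The plan is to use the poset method of Herzog--Vladoiu--Zheng recalled above and to extract the upper bound from a counting (averaging) argument on the first two levels of $\mathcal{P}_I^h$. First I would record the two combinatorial quantities appearing in the bound. The minimal monomial generators of $I$ are exactly the monomials $x_ux_v$ with $u,v$ in different parts, so their supports are the edges of $G$; there are $m:=\sum_{1\le i<j\le k} r_ir_j$ of them, each of cardinality $2$. The elements of $\mathcal{P}_I^h$ of cardinality $3$ are precisely the $3$-subsets of $[n]$ that contain at least one edge, i.e.\ the $3$-subsets not contained in a single part $V_i$; hence their number is $N:=\binom{n}{3}-\sum_{i=1}^k\binom{r_i}{3}$.

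Next I would analyze an arbitrary partition $\mathcal{P}\colon \mathcal{P}_I^h=\bigcup_{i=1}^s [C_i,D_i]$. Since every edge is a minimal element of $\mathcal{P}_I^h$ (no set of cardinality $\le 1$ can contain an edge), each edge must be the bottom $C_i$ of exactly one interval; conversely any interval whose bottom has cardinality $2$ is necessarily bottomed at an edge, because $C_i\in\mathcal{P}_I^h$ forces $C_i$ to contain some edge. Thus there are exactly $m$ intervals with $|C_i|=2$, say those indexed by a set $A$ with $|A|=m$. For $i\in A$ the cardinality-$3$ members of $[C_i,D_i]$ are exactly the sets $C_i\cup\{x\}$ with $x\in D_i\setminus C_i$, so the interval contains exactly $|D_i|-2$ of them; because $\mathcal{P}$ is a partition these cardinality-$3$ sets are pairwise disjoint across different $i\in A$ and all lie in $\mathcal{P}_I^h$. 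Summing yields $\sum_{i\in A}(|D_i|-2)\le N$.

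Finally I would conclude. Writing $t=\sdepth(\mathcal{D(P)})=\min_i|D_i|$, we have $|D_i|\ge t$ for every $i$, hence $m(t-2)\le\sum_{i\in A}(|D_i|-2)\le N$, that is $t\le 2+N/m$. Taking the maximum over all partitions $\mathcal{P}$ gives $\sdepth(I)\le 2+N/m$, which is the asserted inequality. The step needing the most care is the second one: one must verify precisely that the size-$2$ bottoms of the partition are in bijection with the edges and that each such interval accounts for exactly $|D_i|-2$ distinct level-$3$ elements, so that the counts assemble into the clean inequality $m(t-2)\le N$ with no double counting. The remainder is bookkeeping with the two binomial counts.
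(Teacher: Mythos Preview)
Your argument is correct and follows essentially the same counting strategy as the paper: count the $m=\sum_{i<j}r_ir_j$ intervals whose bottom is an edge, observe each contributes $|D_i|-2$ level-$3$ elements of $\mathcal{P}_I^h$, and compare against the total $N=\binom{n}{3}-\sum_i\binom{r_i}{3}$ to get $m(\sdepth(I)-2)\le N$. The only cosmetic difference is that the paper first normalizes a sdepth-optimal partition so that $|D_i|=d$ for every non-singleton interval, whereas you work with an arbitrary partition and use $|D_i|\ge t=\min_i|D_i|$; your formulation is a bit cleaner since it avoids justifying that normalization.
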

\begin{proof}
Note that $I$ is a square free monomial ideal generated by monomials of degree 2. Let $d=\sdepth(I)$. The poset
$P_{I}^h$ has the partition
$\mathcal{P}: P_{I}^h=\bigcup_{i=1}^s[C_i,D_i]$, satisfying
$\sdepth(\mathcal{D(\mathcal{P})})=d$, where
$\mathcal{D(\mathcal{P})}$ is the Stanley decomposition of
$I$ with respect to the partition $\mathcal{P}$. We may
 choose $\mathcal{P}$ such that $|D|=d$ whenever $C\neq D$
in the interval $[C,D]$. Now we see that for each interval $[C,D]$ in $\mathcal{P}$ with $\mid C\mid=2$ we have $d-2$ subsets of cardinality 3 in this interval. The total number of these kind of intervals is $\sum\limits_{1\leq i< j\leq k} r_i r_j$ so we have $$(\sum\limits_{1\leq i< j\leq k} r_i r_j)(d-2)$$
subsets of cardinality 3. This number is less than or equal to the total number of subsets of cardinality 3 in $I$. So
$$(\sum_{1\leq i< j\leq k} r_i r_j)(d-2)\leq \binom {n}{3}-\binom{r_1}{3}-\dots-\binom{r_k}{3}$$
This implies
$$d\leq 2+\frac{\binom{n}{3}-\binom{r_1}{3}-\dots-\binom{r_k}{3}}{\sum\limits_{1\leq i< j\leq k} r_i r_j}$$
\end{proof}
\begin{Example}\label{Ex1}
{\em Let us consider $I=(I_1\cap I_2,I_1\cap I_3,I_1\cap I_4,I_2\cap I_3,I_2\cap I_4,I_3\cap I_4)$ be a monomial ideal in $S=K[x_1,\dots,x_{30}]$, where $I_1=(x_1,\dots,x_7),I_2=(x_8,\dots,x_{14}),I_3=(x_{15},\dots,x_{21}),I_4=(x_{22},\dots,x_{30})$.
\\Applying Lemma \ref{kpartite} we get $\sdepth(I)\leq 13$.}
\end{Example}
\begin{Lemma}\label{primarydec}
$\Ass(S/I)=\{P_1,\dots,P_k\}$ where \[P_i=({x_j\mid x_j \not\in I_i}), \forall\; i=1,\dots,k.\]
\end{Lemma}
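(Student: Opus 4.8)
The plan is to produce an explicit primary decomposition of $I$ and to read the associated primes off from it. Since $I$ is generated by squarefree monomials (of degree $2$), it is a radical ideal, so $S/I$ is reduced and every associated prime of $S/I$ is a minimal prime. Thus it suffices to exhibit an irredundant intersection of prime ideals equal to $I$, and my claim is that this intersection is exactly
\[
I = \bigcap_{i=1}^k P_i.
\]

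To verify this identity I would argue on monomials, all the ideals in sight being monomial ideals. Write $V_i\subseteq[n]$ for the index set of the $i$-th block. For a monomial $m$, observe that $m\in P_i$ precisely when $\supp(m)\not\subseteq V_i$, because $P_i$ is generated by exactly those variables whose index lies outside $V_i$. Hence $m\in\bigcap_{i=1}^k P_i$ if and only if $\supp(m)$ is contained in no single block $V_i$, i.e. $\supp(m)$ meets at least two distinct blocks. If it does, there are $a\in V_i$ and $b\in V_j$ with $i\neq j$ and $x_a,x_b\mid m$; since $\{v_a,v_b\}$ is an edge of the complete $k$-partite graph $G$, the generator $x_ax_b$ divides $m$, so $m\in I$. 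Conversely every monomial of $I$ is divisible by some edge generator $x_ax_b$ with $a,b$ in different blocks, so its support meets two blocks. This gives the desired equality.

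It then remains to see that the decomposition is irredundant. Each $P_i$ is generated by a subset of the variables, hence is a monomial prime, so I only need to show that no $P_i$ can be omitted. Fix $i$ and consider the monomial $m_i$ whose support is exactly $V_i$ (here I use $r_i\geq 2$, so in particular $V_i\neq\emptyset$). Since $\supp(m_i)=V_i\subseteq V_i$ we have $m_i\notin P_i$, while for every $j\neq i$ the blocks are disjoint and nonempty, whence $V_i\not\subseteq V_j$ and therefore $m_i\in P_j$. By the support characterization above $m_i\notin I$, because its support lies in the single block $V_i$. Thus $m_i\in\bigl(\bigcap_{j\neq i}P_j\bigr)\setminus I$, which shows $\bigcap_{j\neq i}P_j\neq I$ and that $P_i$ is genuinely needed. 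Consequently $I=\bigcap_{i=1}^k P_i$ is an irredundant intersection of primes, and therefore $\Ass(S/I)=\{P_1,\dots,P_k\}$.

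The step I expect to require the most care is the membership criterion $m\in P_i\iff\supp(m)\not\subseteq V_i$ together with its translation into the test $m\in I\iff\supp(m)$ meets two blocks. Once everything is phrased in terms of supports and blocks the combinatorics is transparent, but one must be careful to treat arbitrary (not necessarily squarefree) monomials, which is precisely where passing to $\supp(m)$ does the work.
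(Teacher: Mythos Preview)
Your argument is correct. You give a direct, combinatorial verification that $I=\bigcap_{i=1}^k P_i$ by translating membership in each $P_i$ and in $I$ into a condition on $\supp(m)$ relative to the blocks $V_i$, and then you check irredundancy with the explicit witnesses $m_i=\prod_{j\in V_i}x_j$. This is clean and complete.

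The paper proceeds differently: it obtains the decomposition by an iterative colon-splitting, using at each step the identity $J=(J:x_\ell)\cap(J,x_\ell)$. Starting from $I$ and peeling off the variables of the first block one by one, one finds $(I:x_1)=P_1$ and is left with $(I,x_1,\dots,x_{r_1})=(I_1,\sum_{2\le i\ne j}I_i\cap I_j)$; repeating over the successive blocks produces $P_2,\dots,P_k$ in turn. Your approach is more conceptual and yields the answer in one stroke, at the cost of needing the (easy) observation that squarefree monomial ideals are radical so that the minimal primes are the only associated primes. The paper's approach is more algorithmic and avoids invoking radicality, but it leaves several verifications (such as $(I:x_1)=P_1$ and the irredundancy of the final intersection) implicit. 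Both arrive at the same endpoint; yours is arguably the more transparent of the two.
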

\begin{proof}
We proceed as follows, Let $I=(I:x_1)\cap(I,x_1)$. We see that $(I:x_1)=P_1$. Let $J_{11}=(I,x_1)$.
Now $J_{11}=(J_{11}:x_2)\cap(J_{11},x_2)$ we have $(J_{11}:x_2)=(P_1,x_1)$
\\But we can omit $(J_{11}:x_2)$ because $P_1$ already appear in the primary decomposition. Proceeding in this way up to step $r_1$ we get $I=P_1\cap (I_1,\sum\limits_{2\leq i\neq j} I_i\cap I_j )$
\\Let $J_2=(I_1,\sum\limits_{2\leq i\neq j} I_i\cap I_j )$. Now we take $J_2=(J_2:x_{r_1+1})\cap(J_2,x_{r_1+1})$ and
we get $(J_2:x_{r_1+1})=P_2$. In this way, after $r_1+r_2$ steps we get $I=P_1\cap P_2\cap (I_1,I_2,\sum\limits_{3\leq i\neq j} I_i\cap I_j)$ and finally
 $I=P_1\cap\dots\cap P_k$.
\end{proof}
\begin{Definition}{\em
We call the big size of $I$ (see \cite{DP1}) the minimal number $t = t(I)<s$ such that the sum of all possible $(t + 1)$-prime
ideals of $\Ass(S/I)=\{P_1, . . . , P_k\}$ is the maximal ideal $(x_1,\dots,x_n)$.}
\end{Definition}
\begin{Corollary}\label{sconjecture}
Let $I$ be the edge ideal of complete $k$-partite graph then Stanley's conjecture holds for $I$.
\end{Corollary}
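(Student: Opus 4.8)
The plan is to extract the associated primes from Lemma~\ref{primarydec}, observe that the disjointness of the blocks $V_1,\dots,V_k$ forces the big size of $I$ to be one, and then conclude by the big-size criterion of \cite{DP1}.

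First I would record the shape of the primes. By Lemma~\ref{primarydec} we have $\Ass(S/I)=\{P_1,\dots,P_k\}$, where $P_i=(x_j\mid x_j\notin I_i)$ is generated by exactly those variables lying outside the block $V_i$. Since $V_1,\dots,V_k$ partition the variables into pairwise disjoint nonempty blocks, the variables absent from $P_i$ are precisely those of $V_i$, and all of these occur among the generators of $P_j$ for any $j\neq i$; hence $P_i+P_j=(x_1,\dots,x_n)=\mm$ for every pair $i\neq j$. On the other hand no single $P_i$ equals $\mm$, since $P_i$ omits the nonempty block $V_i$. By the definition of big size this says exactly that $t(I)=1$, and I would then invoke the theorem of \cite{DP1} guaranteeing Stanley's conjecture for a monomial ideal of big size one, which applied to $I$ yields $\sdepth(I)\geq\depth(I)$.

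To make the statement transparent---and as an independent check---I would also pin down the two invariants directly. The Stanley--Reisner complex of $I$ is the independence complex of the complete $k$-partite graph, whose faces are the subsets contained in a single $V_i$; it is therefore a disjoint union of $k$ simplices, a disconnected complex of dimension $r_k-1\geq 1$, so $\depth(S/I)=1$ and hence $\depth(I)=2$ via $0\to I\to S\to S/I\to 0$. For the Stanley depth, every $C\in\mathcal{P}_I^h$ contains the support of some degree-two generator and so has $|C|\geq 2$; thus in any interval partition of $\mathcal{P}_I^h$ all upper ends have cardinality at least $2$, giving $\sdepth(I)\geq 2$. Combining, $\sdepth(I)\geq 2=\depth(I)$.

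The only substantive ingredient is the big-size theorem of \cite{DP1}; the part special to complete $k$-partite graphs---that the big size equals one---is immediate from Lemma~\ref{primarydec} together with disjointness. The main point to be careful about is matching the hypotheses of the cited criterion, in particular checking that it is stated for the ideal $I$ itself and not merely for $S/I$; if only the $S/I$ version is available, the self-contained computation in the previous paragraph supplies the needed inequality $\sdepth(I)\geq\depth(I)$ directly.
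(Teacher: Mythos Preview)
Your proposal is correct and follows the same route as the paper: use Lemma~\ref{primarydec} to identify $\Ass(S/I)$, observe that the big size of $I$ equals $1$, and appeal to the big-size criterion of \cite{DP1}. Your version supplies the verification that $P_i+P_j=\mm$ for $i\neq j$ while no single $P_i$ equals $\mm$, and adds a self-contained check that $\sdepth(I)\geq 2=\depth(I)$; the paper simply cites \cite[Corollary~1.6]{DP1} (and \cite[Theorem~1.2]{HPV}) after asserting the big size is $1$.
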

\begin{proof}
We see that the big size of $I$ is 1 by Lemma \ref{primarydec} so by \cite[Corollary 1.6]{DP1} (see also \cite[Theorem 1.2]{HPV}) Stanley's conjecture holds.
\end{proof}
Let $I'=(I,x_{n+1},\dots,x_{n+p})$ be a monomial ideal in $S'=S[x_{n+1},\dots,x_{n+p}]$.
Let denote by $A$ the upper bound of $\sdepth(I)$ found by Lemma \ref{kpartite}.
\begin{Theorem}\label{kpartite1}
Then \[\sdepth(I')\leq
2+\frac{\binom{n}{3}-\sum\limits_{i=1}^k\binom{r_i}{3}+\binom{p}{3}+n\binom{p}{2}+p\binom{n}{2}}{\sum\limits_{1\leq i<j\leq k} r_i r_j +np+\binom{p}{2}-\frac{p(A+p-1)}{2}}\]
where $\binom{a}{b}=0$ if $a<b$
\end{Theorem}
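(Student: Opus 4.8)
The plan is to mimic the counting in Lemma \ref{kpartite}, now keeping track of both the degree-two generators (the edges) and the degree-one generators $x_{n+1},\dots,x_{n+p}$. Write $d=\sdepth(I')$ and assume $d\geq 2$ (otherwise there is nothing to prove). As in Lemma \ref{kpartite}, I would fix a partition $\mathcal P\colon\mathcal P_{I'}^h=\bigcup_i[C_i,D_i]$ with $\sdepth(\mathcal D(\mathcal P))=d$, refined so that $|D_i|=d$ for every interval with $C_i\neq D_i$. First I would identify the minimal elements of $\mathcal P_{I'}^h$: the singletons in the poset are exactly $\{n+1\},\dots,\{n+p\}$, and the two-element sets in the poset are the edges together with all pairs meeting $\{n+1,\dots,n+p\}$. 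Hence each $\{n+i\}$ is the bottom of an interval with $|C_i|=1$ (there are exactly $p$ of these, all with $|D_i|=d$ since $d\geq 2$), and every interval with a two-element set in it has $|C_i|\in\{1,2\}$.

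The bookkeeping step is a double count of the two- and three-element subsets of $[n+p]$ lying in $\mathcal P_{I'}^h$. Let $E$ and $N$ denote their numbers; a direct count gives $E=\sum_{1\le i<j\le k}r_ir_j+np+\binom p2$ and $N=\binom n3-\sum_{i=1}^k\binom{r_i}3+p\binom n2+n\binom p2+\binom p3$, which are exactly the numerator of the asserted bound and the first three summands of its denominator. Distributing these subsets over the intervals, only those with $|C_i|\in\{1,2\}$ contain two-element sets: each $|C_i|=1$ interval contains $d-1$ of them and each $|C_i|=2$ interval contains exactly one, so if $b$ is the number of $|C_i|=2$ intervals then $E=p(d-1)+b$. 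For three-element sets, each $|C_i|=1$ interval contains $\binom{d-1}2$, each $|C_i|=2$ interval contains $d-2$, and all remaining intervals contribute nonnegatively, so
\[N\;\geq\;p\binom{d-1}2+b(d-2).\]
Substituting $b=E-p(d-1)$ collapses the right-hand side, yielding the central inequality
\[(d-2)\Bigl(E-\tfrac{p(d-1)}2\Bigr)\;\leq\;N.\]

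To pass from $d$ to $A$ in the denominator I need the a priori bound $d\leq A+p$, and this is the step I expect to be the crux. I would obtain it by restriction: intersecting every interval of $\mathcal P$ with the Boolean lattice $2^{[n]}$ produces a partition $\mathcal P_I^h=\bigcup_{C_i\subseteq[n]}[C_i,D_i\cap[n]]$ of the characteristic poset of $I$, and since $|D_i\cap[n]|\geq|D_i|-p\geq d-p$ for every interval, this partition has Stanley depth at least $d-p$. Hence $\sdepth(I)\geq d-p$, and Lemma \ref{kpartite} gives $d\leq\sdepth(I)+p\leq A+p$. Because $d-2\geq 0$ and $d-1\leq A+p-1$, we have $E-\tfrac{p(A+p-1)}2\leq E-\tfrac{p(d-1)}2$, so the central inequality upgrades to $(d-2)\bigl(E-\tfrac{p(A+p-1)}2\bigr)\leq N$; dividing by the (assumed positive) denominator yields the claimed bound. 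The delicate points are to justify the refinement $|D_i|=d$ exactly as in Lemma \ref{kpartite}, to verify that the restricted family is genuinely a partition of $\mathcal P_I^h$ (no set of $[n]$ in the poset is lost and none is counted twice), and to ensure the final denominator is positive so that the division is legitimate.
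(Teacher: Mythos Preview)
Your argument is correct and follows essentially the same route as the paper: you set up the same refined partition, perform the same double count of two- and three-element sets in $\mathcal P_{I'}^h$ (your identity $b=E-p(d-1)$ and inequality $N\geq p\binom{d-1}{2}+b(d-2)$ collapse exactly to the paper's displayed inequality), and then invoke $d\leq A+p$ to replace $d-1$ by $A+p-1$ in the denominator. The only cosmetic differences are that you abstract the counts into $E,N,b$ while the paper enumerates the interval types explicitly, and that you supply an inline restriction argument for $d\leq A+p$ whereas the paper cites \cite[Lemma~2.11]{I} together with Lemma~\ref{kpartite}; your restriction argument is precisely a proof of that cited lemma in this situation.
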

\begin{proof}
Note that $I'$ is a squarefree monomial ideal generated by
monomials of degree 2 and 1. Let $d=\sdepth(I')$. The poset
$P_{I'}$ has the partition
$\mathcal{P}: P_{I'}=\bigcup_{i=1}^s[C_i,D_i]$, satisfying
$\sdepth(\mathcal{D(\mathcal{P})})=d$, where
$\mathcal{D(\mathcal{P})}$ is the Stanley decomposition of
$I'$ with respect to the partition $\mathcal{P}$. We may
 choose $\mathcal{P}$ such that $|D|=d$ whenever $C\neq D$
in the interval $[C,D]$.

For each interval $[C_i,D_i]$ in  $\mathcal{P}$ with
$|C_i|=2$ when in the corresponding monomial the variables belong to $\{x_1,\ldots,x_n\}$ we have $|D_i|-|C_i|$ subsets of
cardinality 3 in this interval. We have $\sum\limits_{1\leq i<j\leq k} r_i r_j$ such intervals. Now for each interval
$[C_j,D_j]$ when $\mid C_j\mid=1$ we have at least $\binom{d-1}{2}$ subsets of cardinality 3 in this interval. We have $p$
such intervals. So we have
$p\binom{d-1}{2}$ subsets of cardinality 3.

 Now we consider those intervals
$[C_l,D_l]$ such that $|C_l|=2$ and the corresponding monomial
is of the form $x_l x_\lambda$, where
$x_l \in \{x_{n+1},\ldots,x_{n+p}\}$. Now either
$x_\lambda\in \{x_1,\ldots,x_n\}$ or
$x_\lambda\in \{x_{n+1},\ldots,x_{n+p}\}$. If
$x_\lambda\in \{x_1,\ldots,x_n\}$ then we have $np$
such intervals and each has at least $d-2$ subsets of
cardinality 3. If $x_\lambda\in\{x_{n+1},\ldots,x_{n+p}\}$
then we have
$\binom{p}{2}$ such intervals and each has at least $d-2$ subsets of
cardinality 3. Some subsets of cardinality 2 of the form
$C_l$ already appear in the intervals $[C_j,D_j]$ and such
subsets are $p(d-1)$ in number. Since the partition is
disjoint, we subtract this from total number of $C_l$'s,
so that we have at least
\[(\sum\limits_{1\leq i<j\leq k} r_i r_j)(d-2)+p\binom{d-1}{2}+\big[np+\binom{p}{2}-p(d-1)\big](d-2)\]
subset of cardinality 3. This number is less than or equal to the total number of subsets of cardinality 3. So
\[(\sum\limits_{1\leq i<j\leq k} r_i r_j)(d-2)+p\binom{d-1}{2}+\big[np+\binom{p}{2}-p(d-1)\big](d-2)\]
\[\leq \binom{n}{3}-\sum\limits_{1\leq i<j\leq k} \binom{r_i}{3}+\binom{p}{3}+n\binom{p}{2}+p\binom{n}{2}\]
Now we know by Lemma \ref{kpartite} and \cite[Lemma 2.11]{I} that
$d\leq A+p$. This implies
$-(d-1)\geq -A-p+1$. As in \cite{CQ}, using this in the left side
of above inequality, one gets
\[\big(\sum\limits_{1\leq i<j\leq k} r_i r_j +np+\binom{p}{2}-\frac{p(A+p-1)}{2}\big)(d-2)\]
\[\leq(\sum\limits_{1\leq i<j\leq k} r_i r_j)(d-2)+p\binom{d-1}{2}+\big[np+\binom{p}{2}-p(d-1)\big](d-2)\]
Combining both inequalities we get the required result.
\end{proof}
\begin{Example}{\em
Let $I'=(I,x_{31},\dots,x_{40})\subset S'=S[x_{31},\dots,x_{40}]$ be a monomial ideal, where $I$ is the same ideal as in Example \ref{Ex1}. Then by \cite[Theorem 2.11]{I} $\sdepth(I')\leq 23$. We see that $n=30,k=4,p=10,A=13,r_1 =7,r_2 =7,r_3 =7,r_4 =9$ Now by our Theorem \ref{kpartite1} we have $\sdepth(I')\leq 18$.}
\end{Example}
\bigskip
\section{Stanley depth of edge ideal of an $s$-uniform complete bipartite hypergraph}
 \begin{Definition}{\em
Let $V=\{v_1,\dots,v_t\}$ be a finite set and $E=\{E_1,\dots,E_r\}$ be a collection of distinct subsets of $V$. The pair $\mathcal{G}=\mathcal{G}(V,E)$ is said to be a hypergraph if $E_j\neq\emptyset$ for all $j$, where $V$ and $E$ are the sets of vertices and edges of $\mathcal{G}$ respectively.}
\end{Definition}
A hypergraph is said to be $s$-uniform hypergraph, if $|E_j|=s$ for all $j$.
\begin{Definition}{\em
Associate to each vertex $v_j$ of a hypergraph $\mathcal{G}$ a variable $x_j$ of a polynomial ring $S=K[x_1,\dots,x_t]$ then the edge ideal of $\mathcal{G}$ is defined as \[E(\mathcal{G})=\big(\{\prod\limits_{v_i\in E_j} x_i: E_j\in E\}\big )\subset S\]}
\end{Definition}
Let $I\subset S$ be the edge ideal of a complete bipartite graph over $n$ vertices with $n\geq 4$ then by Ishaq \cite[Theorem 2.8]{I}, we have $$\sdepth(I)\leq \frac{n+2}{2}.$$
Now our aim is to give an upper bound for the Stanley depth of an edge ideal of a hypergraph which is a kind of generalization to the complete bipartite graph. \\
\indent We say that $\mathcal{G}_s(V,E)$ is an $s$-uniform complete bipartite hypergraph if the following conditions holds
\begin{enumerate}
\item The vertex set $V$ is partitioned into $2$ disjoint subsets $V_1$ and $V_2$.
\item For all hyperedges $E_i$, $E_i\cap V_j\neq E_i$, $j=1,2$.
\item Each $s$-subset of $V$ such that $F\cap V_j\neq F$ for $j=1,2$ belongs to $E$.
\end{enumerate}
If $s=2$ then the hypergraph $\mathcal{G}_2(V,E)$ is just a complete bipartite graph.
\begin{Example}
Let $\mathcal{G}_3(V_1\cup V_2,E)$ be a 3-uniform bipartite hypergraph with $|V_1|=3$ and $|V_2|=3$. Then the edge ideal of $\mathcal{G}_3(V_1\cup V_2,E)$ is\\ $I=(x_1x_2x_4,x_1x_2x_5,x_1x_2x_6,x_1x_3x_4,x_1x_3x_5,x_1x_3x_6,x_2x_3x_4,x_2x_3x_5,x_2x_3x_6,x_1x_4x_5,\\x_2x_4x_5,
x_3x_4x_5,x_1x_4x_6,x_2x_4x_6,x_3x_4x_6,x_1x_5x_6,x_2x_5x_6,x_3x_5x_6)\subset K[x_1,\dots,x_6]$.
\end{Example}
Let $I_s\subset K[x_1,x_2,\dots,x_{|V|}]$ denote the monomial edge ideal of the hypergraph $\mathcal{G}_s(V,E)$. Then
\begin{Theorem}\label{graphsdepth}
\[s\leq \sdepth(I_s)\leq s+\frac{\binom{|V|}{s+1}-\binom{|V_1|}{s+1}-\binom{|V_2|}{s+1}}{\binom{|V|}{s}-\binom{|V_1|}{s}-\binom{|V_2|}{s}}.\]

\end{Theorem}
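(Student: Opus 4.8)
The plan is to prove the two inequalities separately: the lower bound falls out of the structure of the characteristic poset, while the upper bound follows the same degree-counting strategy as Lemma \ref{kpartite}, now applied to subsets of cardinality $s+1$.

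For the lower bound I would use that $I_s$ is generated by squarefree monomials all of degree $s$, so every minimal generator has support of cardinality exactly $s$. Hence every element $C\in\mathcal{P}_{I_s}^h$ contains the support of some generator and therefore satisfies $|C|\geq s$. In any partition $\mathcal{P}\colon \mathcal{P}_{I_s}^h=\bigcup_{i=1}^r[C_i,D_i]$ into intervals, each top satisfies $|D_i|\geq|C_i|\geq s$, so $\sdepth(\mathcal{D}(\mathcal{P}))=\min_i|D_i|\geq s$. Since at least one such partition exists, the Herzog--Vladoiu--Zheng formula gives $\sdepth(I_s)\geq s$.

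For the upper bound, set $d=\sdepth(I_s)$ and choose an optimal partition $\mathcal{P}\colon \mathcal{P}_{I_s}^h=\bigcup_{i=1}^r[C_i,D_i]$ with $\sdepth(\mathcal{D}(\mathcal{P}))=d$; as in Lemma \ref{kpartite} I may assume $|D_i|=d$ whenever $C_i\neq D_i$. First I would count the minimal generators: these are exactly the $s$-subsets $F\subseteq V$ with $F\not\subseteq V_1$ and $F\not\subseteq V_2$, so their number, equivalently the number of intervals with $|C_i|=s$, is $\binom{|V|}{s}-\binom{|V_1|}{s}-\binom{|V_2|}{s}$. When $d>s$ each such interval has $|D_i|=d>s$, hence contributes $|D_i|-s=d-s$ subsets of cardinality $s+1$ (one for each element of $D_i\setminus C_i$), giving a total of $\big(\binom{|V|}{s}-\binom{|V_1|}{s}-\binom{|V_2|}{s}\big)(d-s)$ subsets of cardinality $s+1$.

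Next I would identify \emph{all} subsets of cardinality $s+1$ occurring in $\mathcal{P}_{I_s}^h$. A set $T$ with $|T|=s+1$ lies in the poset precisely when it contains a hyperedge, i.e.\ some $s$-subset meeting both $V_1$ and $V_2$. The crux is the elementary claim that this happens if and only if $T$ itself meets both $V_1$ and $V_2$: if $T\subseteq V_j$ then every $s$-subset of $T$ is contained in $V_j$, whereas if $T$ meets both parts one can delete a single vertex from the larger side to obtain an $s$-subset still meeting both parts. Thus the number of cardinality-$(s+1)$ subsets in the poset is $\binom{|V|}{s+1}-\binom{|V_1|}{s+1}-\binom{|V_2|}{s+1}$. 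Since the partition is disjoint, the subsets counted above are at most this total, yielding
\[
\Big(\binom{|V|}{s}-\binom{|V_1|}{s}-\binom{|V_2|}{s}\Big)(d-s)\leq \binom{|V|}{s+1}-\binom{|V_1|}{s+1}-\binom{|V_2|}{s+1},
\]
and solving for $d$ gives the stated upper bound (the case $d=s$ being trivial, as the fraction is nonnegative). The main obstacle is precisely this combinatorial identification: pinning down which $(s+1)$-subsets belong to $\mathcal{P}_{I_s}^h$ and verifying the delete-a-vertex argument, so that the numerator comes out as $\binom{|V|}{s+1}-\binom{|V_1|}{s+1}-\binom{|V_2|}{s+1}$. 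Everything else is a direct transcription of the degree-$2$ counting of Lemma \ref{kpartite} to degree $s$.
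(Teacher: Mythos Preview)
Your proposal is correct and matches the paper's approach essentially line for line: the upper bound is obtained by the same counting of $(s+1)$-subsets in the poset (the paper phrases the numerator count as ``hyperedges of $\mathcal{G}_{s+1}(V,E)$'' without spelling out your delete-a-vertex justification), and the lower bound in the paper is simply a citation of \cite[Lemma~2.1]{S1}, whose content is precisely the direct argument you wrote out.
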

\begin{proof}
Note that $I_s$ is a squarefree monomial ideal generated by squarefree monomials of degree $s$. By \cite[Lemma 2.1]{S1} $s\leq \sdepth(I_s)$. Now we count the number of monomials of degree $s$ in $I_s$. To count the number of monomials of degree $s$ in $I$ we have to count the number of hyperedges of cardinality $s$ in $\mathcal{G}_s(V,E)$. The hypergraph $\mathcal{G}_s(V,E)$ contains all the edges $E_i$ of cardinality $s$ such that $E_i\cap V_j\neq E_i$, $j=1,2$. This means that a hyperedge $C$ of cardinality $s$ does not belongs to $\mathcal{G}_s(V,E)$ if $C\subset V_j$ for some $j$. Now let $N$ denotes the number of hyperedges which belongs to $\mathcal{G}_s(V,E)$. Then $N=\binom{|V|}{s}-\binom{|V_1|}{s}-\binom{|V_2|}{s}$ and if $s>|V_j|$ for some $j$ then we take $\binom{|V_j|}{s}=0$. Similarly to count the number of squarefree monomials of degree $s+1$ in $I$, we have to count the number of hyperedges of the hypergraph $\mathcal{G}_{s+1}(V,E)$. Let $M$ be the number of hyperedges of the hypergraph $\mathcal{G}_{s+1}(V,E)$ then as before $M={\binom{|V|}{s+1}-\binom{|V_1|}{s+1}-\binom{|V_2|}{s+1}}$ and if $s+1>|V_j|$ for some $j$ then we take $\binom{|V_j|}{s+1}=0$. By repeating the proof of Lemma \ref{kpartite} for $I_s$ we have $\sdepth(I_s)\leq s+\frac{M}{N}$ and the required result follows.
\end{proof}
\begin{Example}{\em
Let $I\subset S=K[x_1,\ldots,x_{15}]$ be the edge ideal of the hypergraph $\mathcal{G}_5(V,E)$ with $|V|=15$, $|V_1|=7$ and $|V_2|=8$. Then by Theorem \ref{graphsdepth} we have $5\leq \sdepth(I)\leq 6$.}
\end{Example}
\begin{Remark}{\em
Let $I$ be the edge ideal of the hypergraph $\mathcal{G}_s(V,E)$, if $s>|V_1|,|V_2|$ then $I$ is the squarefree Veronese ideal $I_{|V|,\,s}$.}
\end{Remark}

\end{document}